\documentclass[12pt]{article}

\usepackage{amsmath, amssymb, amsthm, amsfonts, mathtools, array, xfrac, float, multirow, indentfirst, url, enumerate, comment, fullpage, afterpage, pifont, makecell, xcolor,mathrsfs,enumitem}

\usepackage[numbers]{natbib}

\providecommand{\noopsort}[1]{}
\usepackage{hyperref}
\usepackage{booktabs, siunitx, xcolor, graphicx, titlesec}

\allowdisplaybreaks

\newtheorem{theorem}{Theorem}
\newtheorem{conjecture}{Conjecture}
\newtheorem{proposition}{Proposition}

\newtheorem{lemma}{Lemma}

\theoremstyle{definition}

\let\svthefootnote\thefootnote
\newcommand\freefootnote[1]{%
  \let\thefootnote\relax%
  \footnotetext{#1}%
  \let\thefootnote\svthefootnote%
}

\definecolor{vio}{RGB}{118, 120, 238}

\newcommand{\mcA}{\mathcal{A}}

\newlength{\bibitemsep}
\newlength{\bibparskip}
\let\oldthebibliography\thebibliography
\renewcommand\thebibliography[1]{%
  \oldthebibliography{#1}%
  \setlength{\parskip}{\bibitemsep}%
  \setlength{\itemsep}{\bibparskip}%
}

\title{An update on the Linnik--Goldbach problem}

\author{Daniel R. Johnston\\
Max Planck Institute for Mathematics, Bonn, Germany \\ johnston@mpim-bonn.mpg.de \and 
Tim Trudgian \\
School of Science, UNSW Canberra, Australia \\
timothy.trudgian@unsw.edu.au}

\date{}

\begin{document}

\maketitle

\freefootnote{DRJ and TT are supported by Australian Research Council Discovery Project DP240100186.}
\freefootnote{Key phrases: Linnik--Goldbach problem, Romanov problem, Goldbach's conjecture, sieve methods}
\freefootnote{2020 MSC codes: 11N36, 11P32 (Primary) 11M26, 11P55 (Secondary)}

\noindent
\textit{RHB at 50 proved that seven 2's would do.\\ Twenty-five years later, we show six in this review.\\ We wish him to his century when proofs of 5 are due!}
\begin{abstract} 
\noindent 
We consider the Linnik--Goldbach problem of writing all large even integers as the sum of two primes and a fixed number of powers of 2. We show that, under the generalised Riemann hypothesis, one can use 6 powers of two. In addition, we discuss refinements to the unconditional case and to the related problem of Romanov in expressing a positive proportion of odd numbers as the sum of a prime and a power of 2.
\end{abstract}
\section{Introduction}
We refine the numerical bounds for the Linnik--Goldbach problem --- a well-studied problem in number theory. The main source of improvement arises from new bounds on Goldbach representations due to Lichtman~\cite{lichtman2023primes}. However, there are other minor optimisations we employ. The goal of this paper is to both showcase these new bounds, and also survey the most recent techniques for tackling the Linnik--Goldbach problem and the related problem of Romanov. Throughout, $p$, possibly with a subscript (e.g.~$p_1,p_2,p_3$), is assumed to be prime. Moreover, $\varphi(\cdot)$ denotes the Euler totient function and $(a,b)$ stands for $\gcd(a,b)$.

\subsection{The Linnik--Goldbach problem}
Linnik's approximation of Goldbach's conjecture asks for the smallest $K\geq 0$ such that every large even integer can be expressed as the sum of two primes and $K$ powers of 2:
\begin{equation}\label{Linnik}
    n=p_1+p_2+2^{\nu_1}+\cdots+2^{\nu_K}.
\end{equation}

Linnik \cite{linnik1953addition} was the first to show that $K$ exists; the first values of $K$ were very large (see e.g.\ \cite{liu1998number}). However, after a series of improvements, Heath-Brown and Schlage-Puchta \cite{heath2002integers} were able to obtain $K=7$ assuming the generalised Riemann hypothesis (GRH). Unconditionally, they were able to use some zero-density results by Heath-Brown \cite{heathcanada} to show\footnote{Included in \cite{heath2002integers} is a note about unpublished work by Elsholtz, which shows that $K=12$ is admissible.} that $K=13$ is admissible. Pintz and Ruzsa \cite{pintz2003linnik,pintz2020linnik} independently obtained the same result ($K=7$) under GRH and $K=8$ unconditionally. We show it is now possible to obtain $K=6$ under GRH. 

\begin{theorem}\label{GRHGoldLin}
    Assuming GRH, every sufficiently large even integer can be expressed as the sum of two primes and $K=6$ powers of $2$.  
\end{theorem}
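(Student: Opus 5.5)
We follow the Heath-Brown--Schlage-Puchta / Pintz--Ruzsa framework and change only the numerical inputs, chiefly Lichtman's improved upper bound for Goldbach representations. Let $N$ be large and even, set $L=\lfloor \log_2 N\rfloor$, and let $r_K(N)$ be the number of representations \eqref{Linnik} with $2^{\nu_i}\le N$. Write $S(\alpha)=\sum_{p\le N}(\log p)e(p\alpha)$ and $G(\alpha)=\sum_{\nu\le L}e(2^\nu\alpha)$. Then
\[
r_K(N)\gg \frac{1}{(\log N)^2}\int_0^1 S(\alpha)^2G(\alpha)^K e(-N\alpha)\,d\alpha .
\]
We split $[0,1]$ into major arcs $\mathfrak M$, consisting of neighbourhoods of rationals $a/q$ with $q\le P$ and width about $P/N$ for a slowly growing $P$, and minor arcs $\mathfrak m$.

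On $\mathfrak M$, GRH gives the asymptotic for $S(\alpha)$ with a good error term. The major arc contribution is then
\[
N\sum_{\nu_1,\dots,\nu_K}\mathfrak S\bigl(N-2^{\nu_1}-\cdots-2^{\nu_K}\bigr),
\]
up to negligible error, where $\mathfrak S$ is the singular series. This is at least $c_K N L^K$ for an explicit constant. To get $c_K$ we use $\mathfrak S(m)\ge 2C_2$ for even $m$ and restrict to tuples with $N-\sum 2^{\nu_i}$ even, which is automatic when the $\nu_i\ge1$. Refining the constant further by averaging $\mathfrak S$ over $m=N-\sum 2^{\nu_i}$, as in Pintz--Ruzsa, gives a better $c_K$.

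On $\mathfrak m$, we bound
\[
\int_{\mathfrak m}|S|^2|G|^K\le \Bigl(\sup_{\mathfrak E_\lambda}|G|\Bigr)^{K-2}\int_0^1|S|^2|G|^2 \;+\;\int_{\mathfrak m\setminus\mathfrak E_\lambda}\cdots,
\]
where $\mathfrak E_\lambda=\{\alpha:|G(\alpha)|\ge\lambda L\}$. Two ingredients control this.
\begin{enumerate}
\item \textbf{Bound on $\int|S|^2|G|^2$.} Expanding gives
\[
\int_0^1|S|^2|G|^2 = \sum_{\nu,\mu}\ \sum_{p_1-p_2=2^\nu-2^\mu}\log p_1\log p_2,
\]
which is $\le (\text{const})\,NL^2$. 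Here the constant is obtained from an upper bound sieve for prime pairs with difference $2^\nu-2^\mu$, averaged over $\nu,\mu$. This is where Lichtman's improved sieve constant (roughly $C<4$ replaced by a smaller value) enters, lowering the constant.
\item \textbf{Off $\mathfrak E_\lambda$.} On $\mathfrak m\setminus\mathfrak E_\lambda$ we have $|G|\le\lambda L$. Together with Parseval, $\int|S|^2\le N\log N$, and the GRH minor-arc bound $|S(\alpha)|\ll N^{1-\delta}$ on $\mathfrak m$, this makes the contribution at most
\[
\lambda^{K-2}L^{K-2}\cdot(\text{const})\,NL^2,
\]
with the part of $\mathfrak E_\lambda\cap\mathfrak m$ negligible by the measure bound for $\mathfrak E_\lambda$ (Heath-Brown--Schlage-Puchta). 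The admissible $\lambda$ as a function of $\mathrm{meas}(\mathfrak E_\lambda)$ comes from their bound $\mathrm{meas}(\mathfrak E_\lambda)\ll N^{-E(\lambda)}$, with $E(\lambda)$ computed numerically by their transfer-matrix/eigenvalue method.
\end{enumerate}
Then $r_6(N)>0$ follows once
\[
c_6>\lambda^{4}\cdot C_{\mathrm{Lichtman}},
\]
with $\lambda$ taken as small as GRH permits. Under GRH the minor arc bound for $S$ is strong enough that $E(\lambda)$ need only exceed a modest threshold, around $1/2$ less a small amount.

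The main obstacle is this final numerical inequality. With the old sieve constant $K=7$ was needed, so everything hinges on whether Lichtman's bound shrinks the minor-arc constant enough to absorb one factor of $\lambda\approx 0.72$. I would first try the crude $\mathfrak S\ge 2C_2$ major-arc constant. If the inequality fails narrowly, I would recover the margin by averaging the singular series over powers of two and optimising the sieve weighting over the differences $2^\nu-2^\mu$ according to the $2$-adic structure. The latter is what Lichtman's result allows, since it gives constants depending on the prime factors of the difference.
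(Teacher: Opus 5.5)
Your plan follows the Heath-Brown--Schlage-Puchta route, applying the circle method directly to $\int_0^1 S^2G^6e(-N\alpha)\,d\alpha$. The paper instead inserts $C_1=6.7814$ into the Pintz--Ruzsa criterion, Theorem~\ref{PRthm}, and checks $A(3)+C_2'(0.7163436)^4\le 0.865<1$. A different route would be fine, but as set up yours cannot reach $K=6$ with Lichtman's constant, and the problem is more than the final inequality being unchecked.

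You bound the minor arcs by $(\lambda L)^{K-2}\int_0^1|S|^2|G|^2$, that is, by the \emph{full} integral. Its diagonal $\nu=\mu$ alone is $\sim NL^2\log 2$, and the sieve gives $(C_1C_0R_0+o(1))NL^2$ for the rest. So your minor-arc bound is about $(6.7814\cdot 1.2784+0.6931)(0.7163)^4NL^6\approx 9.36\cdot 0.263\,NL^6\approx 2.47\,NL^6$. The major-arc term is $NL^6$ times the mean of $\mathfrak S(N-\sum_i 2^{\nu_i})$. That mean is $\ge 2C_0\approx 1.32$, but its average over $N$ is exactly $2$, the mean value of the singular series. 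Hence no lower bound uniform in $N$ can exceed $2$. Neither averaging the singular series nor re-weighting the sieve over the differences $2^\nu-2^\mu$ can save the inequality.

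The missing idea is to subtract the major-arc part: $\int_{\mathfrak m}|S|^2|G|^2=\int_0^1|S|^2|G|^2-\int_{\mathfrak M}|S|^2|G|^2$, where $\int_{\mathfrak M}|S|^2|G|^2\sim\bigl(2C_0R_0+\frac{\log P}{\log N}\log 2\bigr)NL^2$. This replaces $C_1$ by $C_1-2$ and $\log 2$ by $(1-\frac{\log P}{\log N})\log 2$. The result is exactly the constant $2C_2'\approx 6.46$ of Lemma~\ref{L2lem}. It is through $C_1-2$ (now $4.78$ rather than $6$) that Lichtman's bound becomes decisive.

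This subtraction, and the value $\lambda\approx 0.7163$ itself, require $P$ to be a power of $N$ ($P=\sqrt N L^{-8}$ under GRH), not slowly growing. With slowly growing $P$, your claim $|S(\alpha)|\ll N^{1-\delta}$ on $\mathfrak m$ is false, since $|S(a/q)|\approx N/\varphi(q)$ for squarefree $q$ just above $P$. Then $\mathfrak E_\lambda\cap\mathfrak m$ cannot be discarded using $E(\lambda)>1/2$, a threshold that presupposes $\sup_{\mathfrak m}|S|\ll N^{3/4+\varepsilon}$. The diagonal saving $\frac12\log 2$ is also lost.

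Even after both repairs, your first choice $\mathfrak S\ge 2C_0$ fails, because the major-arc constant must exceed $6.46\cdot 0.263\approx 1.70$. You would need two further things:
\begin{itemize}
\item a lower bound, uniform in $N$, for the mean of $\mathfrak S(N-\sum_i 2^{\nu_i})$ (the prime $3$ alone divides $N-\sum_{i\le 6}2^{\nu_i}$ for a proportion at least $21/64+o(1)$ of tuples, giving about $2C_0\cdot 85/64\approx 1.75$);
\item an evaluation of the major arcs of $S^2G^6$ at this larger $P$.
\end{itemize}

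The Pintz--Ruzsa argument used in the paper avoids all of this. After Cauchy--Schwarz its main term is the exact mean $2NL^6$, and the non-uniformity of sums of powers of $2$ is charged to $A(3)<0.0136$. The proof then reduces to $A(3)+C_2'c_1^4\approx 0.0136+3.230\cdot 0.2633\approx 0.864<1$.
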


The main estimate required to obtain Theorem~\ref{GRHGoldLin} is an upper bound on Goldbach representations due to Lichtman~\cite{lichtman2023primes}, which we detail in Sections~\ref{uppersubsect} and~\ref{uppersect}. Later in Table~\ref{PRtable}, we display the parameters required to achieve further improvements, both conditionally and unconditionally. Notably, we fall just short of obtaining $K=7$ unconditionally with our setup. However, recently announced work of Maynard, Pandey and Radziwi\l\l\, on exponential sums over primes should allow one to easily overcome this barrier and attain $K=7$. We discuss this further in Section~\ref{Maynardsect}. We also remark that $K=4$ is possible upon assuming the Elliott--Halberstam conjecture (see Conjecture~\ref{Ellhcon} and Theorem~\ref{EHthm}). 

\subsection{The Romanov problem}\label{romintro}

A very closely related problem to the Linnik--Goldbach problem is the \emph{Romanov problem}, which concerns $n$ that can be expressed as the sum of a prime and a power of 2:
\begin{equation*}\label{romeq}
    n=p+2^a.
\end{equation*}
In this context, we define
\begin{equation*}
    d(N):=\frac{|\{n\leq N:n=p+2^a\}|}{N}
\end{equation*}
to be the density of such numbers less than $N$, and
\begin{equation*}\label{limdefs}
    \underline{d}=\liminf_{N\to\infty}d(N)
\end{equation*}
be the lower density. The fact that $\underline{d}>0$ is due to Romanov~\cite{romanoff1934}, and an explicit value for the lower bound for $\underline{d}$ is often referred to as \emph{Romanov's constant}. Currently, the best known lower bound for $\underline{d}$ is
\begin{equation}\label{elsholtzlb}
    \underline{d}\geq 0.10788.    
\end{equation} 
due to\footnote{A slightly weaker bound is actually given in~\cite[Theorem~1]{elsholtz2018}. However, upon repeating the relevant computations we obtained the bound given in \eqref{elsholtzlb}, which is also stated at the end of page 721 of~\cite{elsholtz2018}.} Elsholtz and Schlage-Puchta~\cite{elsholtz2018}. Given the improvement we attain in the Goldbach-Linnik problem, it seems natural that an improvement should be possible in the Romanov problem too. However, due to a technicality in Elsholtz and Schlage-Puchta's argument, this is actually not possible. Instead, we must revert to an older technique for bounding $\underline{d}$ due to Pintz~\cite{pintz2006}. Here, Lichtman's new bound for Goldbach representations leads to $\underline{d}\geq 0.10695$. Although this improves upon Pintz's original bound, it unfortunately falls short of beating Elsholtz and Schlage-Puchta's bound~\eqref{elsholtzlb}. 

In any case, given the recent activity in improving bounds for Goldbach representations, it seems that sharper bounds in the Romanov problem are within reach. For this reason, in Section~\ref{Romsect}, we include additional discussion on the Romanov problem, and a table of parameters required for further refinements. 

\subsection{Upper bounds for Goldbach representations}\label{uppersubsect}
 
For an even number $N>2$, let \begin{equation}\label{Gndef}
    G(N)=\#\{(p_1,p_2)\in\mathbb{N}^2:2<p_1,p_2<N,\:p_1+p_2=N\}
\end{equation}
denote the number of Goldbach representations of $N$. Bounds for $G(N)$ have historically been linked to the Linnik--Goldbach and Romanov problems. We remark that Hardy and Littlewood~\cite[Conjecture~A]{hardy1923some} conjectured that 
\begin{equation*}\label{HLeq}
    G(N)\sim \frac{2C_0N}{(\log N)^2}\prod_{\substack{p\mid N\\p>2}}\frac{p-1}{p-2},
\end{equation*}
where
\begin{equation}\label{twinpconst}
    C_0=\prod_{p>2}\left(1-\frac{1}{(p-1)^2}\right)=0.66016\ldots
\end{equation}
is the twin-prime constant. Although a non-zero lower bound on $G(N)$ is out of reach, we have upper bounds of the form
\begin{equation}\label{Cstareq}
    G(N)\leq(C^*+\varepsilon)\frac{C_0N}{(\log N)^2}\prod_{p\mid N}\frac{p-1}{p-2},
\end{equation}
with $C^*\geq 2$ and $\varepsilon>0$ arbitrarily small. The value $C^*=8$ is commonly cited in the literature, and readily follows from classical sieve methods and the Bombieri--Vinogradov theorem (see~\cite[Theorem~2]{bombieri1966small} and \cite[Theorem~3.1]{heathsieves}). Using carefully weighted sieves, the value of $C^*$ can be lowered, with Chen~\cite{chen1978goldbach} proving $C^*=7.8342$ in 1978, and Wu~\cite{wu2004chen} proving $C^*=7.8209$ in 2004.

In recent years, there has been breakthrough work on variants of the Bombieri--Vinogradov theorem, allowing for smaller admissible values of $C^*$. The foundation was laid down by Maynard~\cite{maynard2025one,maynard2025two,maynard2025three} and enhanced by Lichtman~\cite{lichtman2023primes,lichtman2025modification} and Pascadi~\cite{pascadi2025exponents}. Notably, in~\cite{lichtman2023primes} the value ${C^*=6.7814}$ is attained, a significant improvement on previous work. 

\begin{proposition}[{\cite[Theorem~1.2]{lichtman2023primes}}]\label{lichtprop}
    The bound~\eqref{Cstareq} holds with $C^*=6.7814$.
\end{proposition}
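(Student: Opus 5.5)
This is a cited result (Lichtman's theorem), so the plan is to reconstruct the strategy of \cite{lichtman2023primes} rather than invent a new one.

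We write $G(N)$ as a sifted count for the sequence
\[
\mathcal{A}=\{N-p: p<N\}.
\]
Up to a negligible error, $G(N)$ equals $S(\mathcal{A},z)$, the number of elements of $\mathcal{A}$ with no prime factor below $z=N^{1/2}$. We bound $S(\mathcal{A},z)$ from above with the upper-bound linear sieve at level $D=N^{\theta}$. This requires an equidistribution input of the form
\[
|\mathcal{A}_d| = \frac{\pi(N)}{\varphi(d)} + r_d,
\]
with $\sum_{d<D}\lambda_d r_d$ small for well-factorable sieve weights $\lambda_d$.

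The Bombieri--Vinogradov theorem gives $\theta=1/2$. The linear sieve bound $F(s)=2e^\gamma/s$ at $s=\log D/\log z$ then yields $C^*=8$ after inserting the singular series $C_0\prod_{p\mid N}(p-1)/(p-2)$.

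The improvement comes in three steps.

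\textbf{Step 1: larger level of distribution.} We replace Bombieri--Vinogradov by a Maynard-type theorem for primes in arithmetic progressions to moduli beyond $N^{1/2}$, valid for well-factorable weights and a fixed residue $N$. Maynard's results give level $3/5$ for triply well-factorable weights. Lichtman's modification pushes the effective level higher (to about $7/12$ or $5/8$, depending on the shape of the weights) by exploiting the specific factorisation structure of the linear sieve weights.

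\textbf{Step 2: exploit the full structure of the weights.} We do not feed a single level $D$ into $F(s)$. Instead we split the linear sieve (Rosser--Iwaniec) weights according to the factorisation of $d$ into products $d=p_1\cdots p_k$ with the $\beta$-sieve constraints. For each class we use the largest level the distribution theorem permits. We then reassemble the main term as an explicit integral in the spirit of Buchstab iterations, comparing against $2e^\gamma/s$ pointwise in the allowed region.

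\textbf{Step 3: numerical optimisation.} We optimise the resulting integrals numerically, possibly adding Chen/Wu-style switching or weighted terms, and verify that the constant comes out at most $6.7814$.

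The main obstacle is Step 2. The distribution theorem only applies to weights of restricted factorable shape, while the linear sieve's weights are not exactly of that shape. One must show that the parts of the sieve support outside the admissible region either fall under Bombieri--Vinogradov level or contribute a controllable loss. Then one must compute that loss precisely enough to reach the stated constant.
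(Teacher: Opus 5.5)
Your overall architecture matches what the paper describes: sift $\{N-p\}$ with the upper-bound linear sieve, let the admissible level depend on the factorisation of $d$, then apply a Chen--Wu weighting. The gap is in Step~1, which rests on the wrong equidistribution input.

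In $\#\mathcal{A}_d=\pi(N;d,N)+O(\log N)$ the residue class is $N$ itself, which is as large as $x=N$; it is not a fixed residue. Maynard's level $3/5$ for triply well-factorable weights, and the levels you quote for linear sieve weights, are fixed-residue results and do not apply to Goldbach. The paper singles out uniformity in the residue as the novelty of Lichtman's work, namely \eqref{tripeq}. There the weighted error sum is small with the supremum over $0<|a|<x^{1+\varepsilon}$, for triply well-factorable weights, but only at level $\theta=153/256\approx 0.598$. That is \emph{below} $3/5$.

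A consistency check shows your levels cannot be what is available. The linear sieve at level $\theta$ gives $C^*=4/\theta$; this is how $\theta=1/2$ yields $8$ and $\theta=1$ yields $4$. So if the linear sieve weights really had level $5/8$, or even $3/5$, you would already get about $6.4$ or $6.67$. Both beat the constant being proved.

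Second, the adaptation to the sieve weights goes the other way from what you state. Iwaniec's linear sieve weights are not triply well-factorable, only ``doubly'' well-factorable. Lichtman's Proposition~6.6 therefore gives them a level \emph{at most} $153/256$, and often lower depending on the divisor structure of $d$. Exploiting their factorisation limits the loss; it does not raise the level above the triply well-factorable one.

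Your Step~2 is the right mechanism: split the weights by factorisation type, use the best admissible level on each piece, and reassemble the main term. It must, however, be run with the uniform-residue ceiling $153/256$ rather than $3/5$ or $5/8$.

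Finally, Step~3 is not an optional extra. The paper attributes the value $6.7814$ to Lichtman's application of Wu's weighted-sieve procedure in \S7, on top of this level input.
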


We remark that in~\cite[p.~4]{pascadi2025exponents}, Pascadi suggests that with an extension of his techniques, one should  be able to reduce $C^*$ further than the value given in Proposition~\ref{lichtprop}.

For the purpose of proving Theorem~\ref{GRHGoldLin}, we actually need a slight variation of Proposition~\ref{lichtprop} (see Proposition~\ref{Cstarstarprop}). This is discussed in the next section.

\section{Statement of sieve upper bound results}\label{uppersect}

In this section, we detail Lichtman's recent upper bound for Goldbach representations, and also provide a variant of Proposition~\ref{lichtprop} for use in the Linnik--Goldbach problem. Here, it is useful for the reader to have a basic understanding of sieve-theoretic arguments, which are detailed in texts including~\cite{halberstam1974sieve,greaves2013sieves,friedlander2010opera}.

In the standard approach for the Linnik--Goldbach problem, we require upper bounds for
\begin{equation}\label{rnhdef}
    R(N,h):=\#\{p_1,p_2\leq N:p_1-p_2=h\}.
\end{equation}
In the following subsections, we first outline the standard argument to obtain bounds for $G(N)$, then describe how in~\cite{lichtman2023primes}, Lichtman managed to obtain Proposition~\ref{lichtprop}. Afterwards, we discuss the very minor modification to obtain a bound for $R(N,h)$ as opposed to $G(N)$.

\subsection{Bounding $G(N)$}
To bound $G(N)$, defined in~\eqref{Gndef}, one may equivalently bound the number of primes in 
\begin{equation}\label{AGdef}
    \mcA=\mcA(N):=\{N-p:p\leq N,(p,N)=1\}.
\end{equation}
Via a standard sifting argument (see~\cite[Theorem~3.11]{halberstam1974sieve}) an upper bound for $G(N)$ can be found by approximating the size of the sets
  $  \mcA_d:=\{a\in\mcA:d\mid a\},
$
where $d$ is square-free and coprime to $N$. In particular, one has
\begin{equation}\label{Adbound1}
    \#\mcA_d=\pi(N;d,N)+O(\log N),
\end{equation}
where
 $   \pi(N;d,a):=\#\{p\leq N:p\equiv a\ \text{(mod $d$)}\}
$
and the $O(\log N)$ term in~\eqref{Adbound1} comes from considering the number of primes $p$ with ${(p,N)>1}$. In turn, one is led to computing an averaged form of the error
\begin{equation*}
    E(N;d,a):=\pi(N;d,a)-\frac{\pi(N)}{\varphi(d)},
\end{equation*}
where $\pi(N):=\pi(N;1,1)$. The classical approach (as employed in the proof of \cite[Theorem~3.11]{halberstam1974sieve}) is to use the Bombieri--Vinogradov theorem, which states that for any $\varepsilon>0$,
\begin{equation}\label{bomvineq}
    \sum_{d\leq x^{\theta-\varepsilon}}\max_{(a,d)=1}\left|E(x;d,a)\right|\ll_{\varepsilon,A}\frac{x}{(\log x)^A},
\end{equation}
with $\theta=1/2$. Here, $\theta$ is referred to as the \emph{level of distribution} of primes: a central problem in analytic number theory is to prove that higher values of $\theta$ are admissible. Essentially, the higher the value of $\theta$, the better sieve-theoretic estimates are possible, with $\theta=1/2$ readily giving a constant of $C^*=8$ in~\eqref{Cstareq}. The Elliott--Halberstam conjecture asserts that $\theta=1$ is possible, which would lead to a constant of $C^*=4$ in~\eqref{Cstareq}.
\begin{conjecture}[Elliott--Halberstam]\label{Ellhcon}
    The bound~\eqref{bomvineq} holds with $\theta=1$.
\end{conjecture}
In practice, one does not require the full statement of the Bombieri--Vinogradov theorem. The absolute values in~\eqref{bomvineq} can be replaced by a suitable weight $\lambda(d)$. Indeed, in~\cite[Theorem~1.7]{lichtman2023primes}, Lichtman shows that
\begin{equation}\label{tripeq}
    \sup_{0<|a|<x^{1+\varepsilon}}\sum_{d\leq x^{\theta-\varepsilon}}\lambda(d)E(x;d,a)\ll_{\varepsilon,A}\frac{x}{(\log x)^A},
\end{equation}
where $\theta=153/256\approx0.597$ and $\lambda(d)$ is \emph{triply well-factorable of level $x^{\theta}$} as defined in~\cite[Definition~1.4]{lichtman2023primes}. Compared to previous work~\cite{maynard2025two,lichtman2025modification}, the main novelty of Lichtman's result is the uniformity in the residue $a$, which is required when bounding $G(N)$.

Unfortunately, the weights $\widetilde{\lambda}(d)$ arising\footnote{In particular, the weights introduced by Iwaniec in~\cite{iwaniec1980}.} in the linear sieve are not triply well-factorable but merely ``doubly" well-factorable. Because of this, Lichtman provides a technical result~\cite[Proposition~6.6]{lichtman2023primes} which shows that one can get a level of distribution with the sieve weights $\widetilde{\lambda}(d)$ that is \emph{at most} $\theta=153/256$, but often lower depending on the divisor structure of the moduli $d$ under consideration. In any case, Lichtman goes beyond the level of distribution offered by the Bombieri--Vinogradov theorem, and obtains $C^*=6.7814$ in~\eqref{Cstareq} using a procedure of Wu~\cite{wu2004chen} in~\cite[\S 7]{lichtman2023primes}.

\subsection{Bounding $R(N,h)$}\label{rnhsect}
Compared to $G(N)$ (cf.~\eqref{AGdef}), to bound $R(N,h)$ one considers the set
\begin{equation*}\label{A2def}
    \mcA'=\mcA'(N,h):=\{p+h:p\leq N,\:(p,h)=1\}
\end{equation*}
and via a sifting argument one is required to approximate
\begin{equation}\label{A2ddef}
    \#\mcA'_d:=\#\{a\in\mcA':d\mid a\}=\pi(N;d,-h)+O(\log h)
\end{equation}
with $d$ square-free and $(d,h)=1$. We note that the expressions~\eqref{Adbound1} and~\eqref{A2ddef} are identical but with the residue class $N$ replaced with $-h$. So, since the results in~\cite{lichtman2023primes} (see~\eqref{tripeq}) are uniform in the residue class, the argument required to bound $R(N,h)$ is identical to that of bounding $G(N)$. In particular, one obtains the following, which mirrors Proposition~\ref{lichtprop}.
\begin{proposition}\label{Cstarstarprop}
    Let $\varepsilon>0$. Then, for all $h<N$,
    \begin{equation}\label{gapsieveeq}
        R(N,h)\leq (C_1+\varepsilon)\frac{C_0N}{(\log N)^2}\prod_{\substack{p\mid h\\p>2}}\frac{p-1}{p-2}
    \end{equation}
    with $C_1=6.7814$ and $C_0$ as in \eqref{twinpconst}.
\end{proposition}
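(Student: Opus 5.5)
The plan is to run Lichtman's proof of Proposition~\ref{lichtprop} verbatim, but with the sifted set $\mcA(N)$ from \eqref{AGdef} replaced by $\mcA'(N,h)$, and to check that every input used there is insensitive to this change. Concretely, we sieve $\mcA'$ by the primes $p<N^{1/2}$ not dividing $h$. Any $p_1\leq N$ with $p_1+h=p_2$ prime and $p_1+h>N^{1/2}$ survives this sieve. The at most $O(N^{1/2})$ pairs with $p_2\leq N^{1/2}$ are negligible, and counting $p_2\le N+h$ rather than $p_2\le N$ only enlarges the count, so $R(N,h)\le S(\mcA',N^{1/2})+O(N^{1/2})$. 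The density function is the same as in the Goldbach case. Namely, $\#\mcA'_d=\frac{1}{\varphi(d)}\pi(N)+r'_d$ for squarefree $d$ with $(d,2h)=1$, and $\#\mcA'_d=0$ for $(d,h)>1$ apart from $O(\log h)$ exceptions. This gives the multiplicative function $g(p)=1/(p-1)$ on primes $p\nmid 2h$ (and $0$ otherwise). The main term $X\prod_{p<z,\,p\nmid 2h}(1-g(p))$ then evaluates, by Mertens, to $2C_0\prod_{p\mid h,\,p>2}\frac{p-1}{p-2}\cdot\frac{N}{\log N}\cdot\frac{e^{-\gamma}}{\log z}$. This is exactly the expression in the Goldbach case with $N$ replaced by $h$ in the local factor.

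Next we handle the remainders. By \eqref{A2ddef}, $r'_d=E(N;d,-h)+O(\log h)$, whereas in Lichtman's argument $r_d=E(N;d,N)+O(\log N)$. Every remainder sum Lichtman controls has the form $\sum_d\lambda(d)E(N;d,a)$, where $\lambda$ is either a triply well-factorable weight as in \eqref{tripeq} or one of the doubly well-factorable Iwaniec linear-sieve weights handled by \cite[Proposition~6.6]{lichtman2023primes}. These estimates are stated uniformly over $0<|a|<x^{1+\varepsilon}$. Since $0<h<N$, the residue $a=-h$ lies in this range, so every such bound transfers directly. The $O(\log h)$ terms contribute $\ll N^{\theta}\log N$ in total, which is negligible. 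The only other point to check is that the coprimality conditions $(d,h)=1$ (versus $(d,N)=1$) enter only through $g$, which we have already accounted for.

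Finally, we feed these inputs into Wu's weighted-sieve procedure exactly as in \cite[\S7]{lichtman2023primes}. That argument is purely combinatorial: it combines Buchstab identities with the linear-sieve functions $F,f$, using the admissible levels $D$ supplied by Proposition~6.6 for each factorisation type. Its only arithmetic inputs are the linear sieve dimension, the density $g$, and the level-of-distribution statements. All three coincide in the two problems, so the same numerical optimisation produces the same constant $C_1=C^*=6.7814$, and hence \eqref{gapsieveeq}.

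I expect the main obstacle to be verifying that the weighted (Chen/Wu-type) part of Lichtman's argument really only uses remainder sums of the uniform form \eqref{tripeq}. In particular, auxiliary sets such as $\{N-p_1p_2p_3\}$ must have natural analogues $\{p_1p_2p_3+h\}$, or more generally products in place of $p$, whose level-of-distribution inputs are likewise uniform in the residue. If some step of Lichtman's argument uses the specific residue $N$ (for instance through $N-p\le N$ being positive, or through a switching argument), I would first try to replace it by the corresponding statement with the residue $-h$, using only the uniformity in $a$ and the fact that $|h|<N$.
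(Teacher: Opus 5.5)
Your proposal is correct and takes essentially the same route as the paper. The paper's argument is simply that $\#\mcA'_d=\pi(N;d,-h)+O(\log h)$ differs from \eqref{Adbound1} only by replacing the residue $N$ with $-h$; since Lichtman's estimates such as \eqref{tripeq} are uniform over $0<|a|<x^{1+\varepsilon}$, his proof of Proposition~\ref{lichtprop} then goes through verbatim. Your additional checks make explicit details the paper leaves implicit: the density function, the main term with $N$ replaced by $h$ in the local factor, and the observation that any switched sets in Wu's procedure would likewise only see a change of residue.
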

A key feature of Proposition~\ref{Cstarstarprop} is the uniformity of $h<N$. If instead $h<(\log N)^D$ for some $D>0$, then a ``small residue" sieving argument could be used, leading to the lower value of $C_1=6.458$ as per \cite[Theorem~1.1]{lichtman2023primes}.

To conclude this section, we also make note of the improvement one gets upon assuming the Elliott--Halberstam conjecture (Conjecture~\ref{Ellhcon}). In particular, for $R(N,h)$ the situation is exactly the same as the case of $G(N)$, whereby replacing the Bombieri--Vinogradov theorem (level of distribution $\theta=1/2$) with the Elliott--Halberstam conjecture (level of distribution $\theta=1$) gives $C_1=4$.

\begin{proposition}\label{EHprop}
   Assuming the Elliott--Halberstam conjecture, one may take $C_1=4$ in~\eqref{gapsieveeq}.
\end{proposition}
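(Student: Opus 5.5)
The plan is to run the classical Selberg upper-bound sieve, in its Bombieri--Vinogradov form as in \cite[Theorem~3.11]{halberstam1974sieve}, on the set $\mcA'=\mcA'(N,h)$. The only change is that Bombieri--Vinogradov is replaced by Conjecture~\ref{Ellhcon}, which raises the sieving level from $x^{1/2}$ to $x^{1-\varepsilon}$. For $p_2\le N$ with $p_2+h=p_1$ prime and $p_1>N^{1/2}$, the number $p_2+h$ has no prime factor below $z=N^{\delta}$ for any fixed $\delta<1/2$. Hence
\[
R(N,h)\le S(\mcA',\mathcal{P},z)+O(N^{1/2}),
\]
where $\mathcal{P}$ is the set of primes not dividing $h$. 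Primes dividing $h$ are handled by the condition $(p,h)=1$ and contribute only $O(\log h)$.

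\textbf{Step 1 (sieve data).} By \eqref{A2ddef}, for squarefree $d$ with $(d,h)=1$,
\[
\#\mcA'_d=\frac{X}{\varphi(d)}+r_d,\qquad X=\pi(N),\qquad r_d=E(N;d,-h)+O(\log h).
\]
So the density function is $g(p)=1/(p-1)$ for $p\nmid h$ and $g(p)=0$ for $p\mid h$. This is a sieve of dimension $\kappa=1$.

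\textbf{Step 2 (main term).} Apply Selberg's $\Lambda^2$ sieve, or equivalently the upper-bound beta sieve, with level $D=N^{1-2\varepsilon}$. For the beta sieve with $s=\log D/\log z\ge 2$ this gives
\[
S\le XV(z)\bigl(F(s)+o(1)\bigr)+\sum_{d<D}|r_d|,\qquad V(z)=\prod_{p<z,\,p\nmid h}\Bigl(1-\frac1{p-1}\Bigr).
\]
Take $z=D^{1/2}$, so that $F(2)=e^\gamma$. Mertens' theorem gives
\[
V(z)\sim 2e^{-\gamma}C_0\prod_{p\mid h,\,p>2}\frac{p-1}{p-2}\,\frac{1}{\log z}.
\]
Combining this with $\log z=\tfrac12(1-2\varepsilon)\log N$ and $X\sim N/\log N$, the main term is
\[
(1+O(\varepsilon))\,\frac{4C_0N}{(\log N)^2}\prod_{p\mid h,\,p>2}\frac{p-1}{p-2}.
\]
This matches the classical computation: at level $\theta$ the constant is $4/\theta$, which gives $8$ for $\theta=1/2$ and $4$ for $\theta=1$.

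\textbf{Step 3 (remainder).} We must show that $\sum_{d<D,\,(d,h)=1}\mu^2(d)|r_d|\ll N/(\log N)^A$. Since $|E(N;d,-h)|\le\max_{(a,d)=1}|E(N;d,a)|$ whenever $(d,h)=1$, Conjecture~\ref{Ellhcon} with $\theta=1$ bounds the $E$-part. This bound holds uniformly in $h$, because the maximum over residues removes any dependence on $h$. The $O(\log h)$ part contributes $O(D\log N)$, which is too large if $D$ is close to $N$.

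\textbf{Main obstacle.} This $O(\log h)$ contribution is the delicate point. I would avoid it by not removing the primes dividing $h$ in the count. Instead I work directly with $\pi(N;d,-h)$ for $(d,h)=1$; the $O(\log h)$ in \eqref{A2ddef} only arises when the condition $(p,h)=1$ is imposed, and dropping that condition changes $R(N,h)$ by $O(\log h)$ in total rather than per modulus. Alternatively, it suffices to take $D=N^{1-2\varepsilon}$ and note that the accumulated error $O(D\log N)=o(N/(\log N)^2)$.

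Letting $\varepsilon\to0$ then gives \eqref{gapsieveeq} with $C_1=4$, uniformly for $h<N$.
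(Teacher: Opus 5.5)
Your proposal is correct and follows the paper's approach. The paper only remarks that rerunning the classical upper-bound sieve for $R(N,h)$ (as in Halberstam--Richert) with the Elliott--Halberstam level $\theta=1$ in place of the Bombieri--Vinogradov level $\theta=1/2$ turns the constant $4/\theta=8$ into $4$, with uniformity in $h$ coming, as you note, from the maximum over residue classes.

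Two cosmetic points. Your ``main obstacle'' is not really one: at level $D=N^{1-2\varepsilon}$ the $O(\log h)$ terms contribute only $O(N^{1-2\varepsilon}(\log N)^{3})$, even with Selberg's $3^{\omega(d)}$ weights. Also, odd $h$ should be set aside first: there $g(2)=1$ violates the sieve axioms, but trivially $R(N,h)=O(1)$.
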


\section{The Linnik--Goldbach problem}\label{GLsect}
In this section, we explore the modern framework of the Linnik--Goldbach problem, both in the GRH and non-GRH cases, and prove Theorem~\ref{GRHGoldLin}. We include Table~\ref{PRtable} to show what further results are possible if $C_1$ in~\eqref{gapsieveeq} is lowered. Throughout, a basic understanding of the circle method is assumed, such as that found in the introductory text~\cite{murty2023}.

\subsection{The method of Pintz and Ruzsa}\label{PRsect}
We begin by detailing the core argument of Pintz and Ruzsa~\cite{pintz2003linnik,pintz2020linnik}, which is the most recent approach to the Linnik--Goldbach problem. The key result is Theorem~\ref{PRthm}, which allows one to obtain a valid value for $K$ in the Linnik--Goldbach problem, by inputting well-studied number-theoretic constants. The overall method is very similar to previous approaches, including that of Heath-Brown and Schlage-Puchta~\cite{heath2002integers} and a subsequent refinement by Platt and Trudgian~\cite{platt2015linnik}. However in~\cite{pintz2020linnik}, Pintz and Ruzsa make use of an ``explicit formula" of Pintz~\cite{pintz2023}, which allows for great flexibility when applying the circle method. 

In what follows, we set $n=N$ to be a sufficiently large even number to be studied in the context of the Linnik--Goldbach problem (see~\eqref{Linnik}) and $L = [\log N/\log 2]$.
In~\cite{pintz2020linnik}, a lower order term is also included in the definition of $L$ for technical purposes. However, we omit this term as it has no impact on the final asymptotic result.

In the setup for the Linnik--Goldbach problem, one studies two representation functions
\begin{equation}\label{rpdef}
    r_k'(n)=\sum_{n=p+2^{\nu_1}+\cdots+2^{\nu_k}}\log p
\end{equation}
and
\begin{equation*}
    r_k''(n)=\sum_{n=p_1+p_2+2^{\nu_1}+\cdots+2^{\nu_k}}\log p_1\log p_2,
\end{equation*}
where one wants to show $r_k''(N)>0$. To do so, one uses $L^1$ and $L^2$ estimates for $r_k'(n)$, combined with Cauchy--Schwarz. The $L^1$ estimate is the following asymptotic.

\begin{lemma}[{cf.~\cite[Lemma~14]{gallagher1975}}]\label{L1lem}
    For any fixed $k\geq 1$, one has $\sum_{n\leq N}r_k'(n)\sim NL^k$.
\end{lemma}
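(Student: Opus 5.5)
The plan is to expand the sum and swap the order of summation: first fix the exponents $\nu_1,\dots,\nu_k$, and then sum over primes using the prime number theorem. By \eqref{rpdef},
\begin{equation*}
\sum_{n\leq N} r_k'(n)=\sum_{\substack{\nu_1,\dots,\nu_k\geq 0\\ 2^{\nu_1}+\cdots+2^{\nu_k}<N}}\ \sum_{p\leq N-2^{\nu_1}-\cdots-2^{\nu_k}}\log p=\sum_{\nu_1,\dots,\nu_k}\theta\bigl(N-2^{\nu_1}-\cdots-2^{\nu_k}\bigr),
\end{equation*}
where $\theta(x)=\sum_{p\leq x}\log p$. Here the $k$-tuples are ordered, matching the convention in \eqref{rpdef}. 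Each exponent satisfies $\nu_i\leq L$, so the total number of tuples is at most $(L+1)^k$. We will compare this sum with $NL^k$.

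\textbf{Upper bound.} Since $\theta(x)\leq \theta(N)\sim N$, the whole sum is at most $(L+1)^k\theta(N)=(1+o(1))NL^k$.

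\textbf{Lower bound.} Fix a small $\delta>0$ and keep only the tuples with every $\nu_i\leq L-M$, where $M=M(\delta,k)$ is a constant chosen so that $k2^{-M}\leq\delta$. For these tuples we have $2^{\nu_1}+\cdots+2^{\nu_k}\leq k2^{L-M}\leq \delta N$. The prime number theorem then gives $\theta(N-\sum 2^{\nu_i})\geq (1-\delta)N(1+o(1))$, uniformly over all such tuples. There are $(L-M+1)^k=(1+o(1))L^k$ of them, because $M$ is fixed while $L\to\infty$. So the sum is at least $(1-\delta+o(1))NL^k$. Letting $\delta\to0$ gives the asymptotic $\sum_{n\le N}r_k'(n)\sim NL^k$.

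There is no real obstacle in this argument. The one point that needs care is uniformity: the error term in the prime number theorem must be uniform over arguments in $[(1-\delta)N,N]$, which is immediate because $\theta(x)/x\to1$. We should also check that dropping the tuples with some $\nu_i>L-M$ changes the count only by $O(ML^{k-1})=o(L^k)$ tuples, which is what the count $(L-M+1)^k$ above already encodes. Finally, we should confirm that the definition $L=[\log N/\log 2]$ makes $2^{\nu}\leq N$ exactly when $\nu\leq L$, so the ranges of summation match.
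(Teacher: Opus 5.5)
Your proof is correct and is essentially the paper's argument. The paper writes the same double sum in the other order, as $\sum_{p\le N} f(N-p)\log p$ with $f$ counting the admissible exponent tuples, and sandwiches it between $\theta(N-N/L)f(N/L)$ and $f(N)\theta(N)$; your truncation $\nu_i\le L-M$ with fixed $\delta$ (then $\delta\to0$) plays the same role as the paper's cutoff $p\le N-N/L$, which lets the truncation shrink with $N$ and so avoids the final limit in $\delta$.
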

\begin{proof}
    Write 
    \begin{equation*}
        \sum_{n\leq N}r_k'(n)=\sum_{p\leq N}f(N-p)\log p
    \end{equation*}
    where $f(x)$ is the number of $k$-tuples $v_1,\ldots,v_k$ with $v_i\leq L$ and
    \begin{equation*}
        2^{v_1}+\cdots+2^{v_k}\leq x.
    \end{equation*}
    Notably,
    \begin{equation}\label{fxbounds}
        \left\lfloor\frac{\log (x/k)}{\log 2}\right\rfloor^k\leq f(x)\leq\left(\frac{\log x}{\log 2}\right)^k
    \end{equation}
    so that $f(x)\sim(\log x/\log 2)^k$. Now, let $\theta(x)=\sum_{p\leq x}\log p$ denote the Chebyshev weighted prime-counting function. Since $f$ is a non-negative increasing function,
    \begin{equation}\label{fNpupper}
        \sum_{p\leq N}f(N-p)\log p\leq f(N)\theta(N)
    \end{equation}
    and
    \begin{equation}\label{fNPlower}
        \sum_{p\leq N}f(N-p)\log p\geq  \sum_{p\leq N-N/L}f(N-p)\log p\geq\theta(N-N/L)f(N/L).
    \end{equation}
    The result then follows by applying~\eqref{fxbounds} and the prime number theorem to~\eqref{fNpupper} and~\eqref{fNPlower}.
\end{proof}
To obtain the $L^2$ bound, Pintz and Ruzsa use a technical application of the circle method. Before stating their bound, we define the number-theoretic constants involved.

To begin with, $C_0$ is the constant \eqref{twinpconst}, $C_1$ is as in Proposition~\ref{Cstarstarprop}, and $R_0$ is given by
\begin{equation*}
    R_0=\sum_{d=1}^\infty\frac{f(2d-1)}{\epsilon(2d-1)},
\end{equation*}
where $f(n)$ is the multiplicative function defined by
\begin{equation*}\label{fmultdef}
    f(p^e)=
    \begin{cases}
        0,&\text{$p=2$ or $e\geq 2$}\\
        (p-2)^{-1},&\text{otherwise},
    \end{cases}
\end{equation*}
and where $\epsilon(d)$ is the multiplicative order of $2$ mod $d$. Existing computations of $R_0$ (see \cite[p.~55]{platt2015linnik}) yield that
\begin{equation}\label{R0bounds}
    1.93642<R_0<1.93656.
\end{equation}
Pintz and Ruzsa then use a function $A(k)$ related to representations of integers as the sum and difference of powers of $2$. Explicitly, $A(k)$ is defined as
\begin{equation*}
    A(k):=\lim_{L\to\infty}\left(\frac{S(k,L)}{2L^{2k}}-1\right),
\end{equation*}
where
\begin{align*}
    S(k,L)&:=\sum_{\substack{m=-\infty\\ m\neq 0}}^\infty r_{k,k}(m)\sigma(m),\\
    r_{k,k}(m,L)&:=\#\{(a_1,\cdots, a_{2k})\in \{0,\ldots ,L\}^{2k}:m=2^{a_1}+\cdots + 2^{a_k}-2^{a_{k+1}}-\cdots -2^{a_{2k}}\}
\end{align*}
and
\begin{equation*}
     \sigma(m):=2C_0\prod_{\substack{p\mid m\\p>2}}\frac{p-1}{p-2}.
\end{equation*}

Khalfalah and Pintz proved several properties of $A(k)$ in \cite[Theorem~1]{khalfalah2006representation}. They showed that $A(k)$ is decreasing with $k$ and $\lim_{k\to\infty}A(k)=0$. Moreover, $A(k)>2^{-2k-1}$ for all $k\geq 1$, and, in addition, the following explicit bounds hold
(\cite[Theorem~2]{khalfalah2006representation})
\begin{align}
    0.27835<A(1)&<0.27926, \qquad 0.05458<A(2)<0.05549,\notag 
    \\
0.012697<A(3)&<0.013598, \qquad    0.003091<A(4)<0.003992. \notag
\end{align}

For their application of the circle method, Pintz and Ruzsa use the standard definitions of major and minor arcs. In particular, one lets $P$ and $Q$ be such that
\begin{equation*}
    2\leq P<Q=\frac{N}{P}
\end{equation*}
and the major $(\frak{M})$ and minor $(\frak{m})$ arcs be given by
\begin{align}
    \frak{M}&=\bigcup_{q\leq P}\bigcup_{\substack{a=1\\(a,q)=1}}^q\left[\frac{a}{q}-\frac{1}{qQ},\frac{a}{q}+\frac{1}{qQ}\right] \notag \\
    \frak{m}&=[1/Q,1+1/Q]\setminus\frak{M}
    \notag.
\end{align}
From here, one then estimates (via Parseval's identity)
\begin{equation*}
    \sum_{1\leq n\leq N}(r_k'(n))^2\leq\int_0^1|S(\alpha)G^k(\alpha)|^2\mathrm{d}\alpha=\int_{\frak{M}}|S(\alpha)G^k(\alpha)|^2\mathrm{d}\alpha+\int_{\frak{m}}|S(\alpha)G^k(\alpha)|^2\mathrm{d}\alpha,
\end{equation*}
where
\begin{equation*}\label{SGdef}
    S(\alpha)=\sum_{p\leq N} e(p\alpha)\log p\quad\text{and}\quad G(\alpha)=\sum_{1\leq \nu\leq L}e(2^\nu\alpha).
\end{equation*}
A key quantity is an acceptable choice of the ``cut-offs" $P$ and $Q$. Assuming GRH, Pintz and Ruzsa take $P=\sqrt{N}L^{-8}$ and $Q=\sqrt{N}$. This is sufficient to give an asymptotic for the major arcs~\cite[Lemma~1]{pintz2003linnik} and a suitable minor arc estimate. 
Unconditionally, the work of Pintz~\cite{pintz2023} allows one to take $P$ as large as $P=N^{4/9-\varepsilon}$ to give an asymptotic for the major arcs. In general, it is preferable to take $P$ as large as possible, so we will just set 
\begin{align*}\label{Pdef}
    P&=\sqrt{N} L^{-8}\qquad\text{(assuming GRH)},\\
    P&=N^{4/9-\varepsilon}\qquad\text{(unconditionally)}
\end{align*}
with $\varepsilon>0$ fixed. In~\cite{pintz2020linnik}, Pintz and Ruzsa set $P\in[N^{0.4},N^{0.41}]$ but there is no harm in setting $P$ to be larger here. Namely, for $\alpha\in\frak{m}$ one just needs the minor arc estimate (\cite[Lemma~2]{pintz2020linnik})
\begin{equation}\label{minorarcpw}
    S(\alpha)\ll\left(\frac{N}{\sqrt{P}}+N^{4/5}+\sqrt{NP}\right)L^4\ll L^4 N^{4/5}
\end{equation}
to hold, which is valid for any $P\in[N^{0.4},N^{0.6}]$.

Next, we define the constant $C_2'$, explicitly given as (see~\cite[Section~4]{pintz2020linnik})
\begin{align}
    C_2'&=\frac{1}{2}(C_1+\varepsilon-2)R_0C_0+\left(1-\frac{\log P}{\log N}\right)\frac{\log 2}{2}.\label{C2def1}
\end{align}
We note that the $\log P/\log N$ term in~\eqref{C2def1} is omitted in~\cite{pintz2020linnik}, but as remarked on~\cite[p.~578]{pintz2020linnik} it may be incorporated for additional optimisation.

Finally, we introduce the parameter $c_1$, given by (see~\cite[\S 7]{pintz2003linnik} and~\cite[\S 5]{pintz2020linnik})
\begin{align}
    c_1=0.7163436\qquad\text{(assuming GRH)},\notag\\
    c_1=0.7894009\qquad\text{(unconditionally)}.\label{lilc1bounds}
\end{align}
To define $c_1$ in a precise manner, suppose that, for some $0<\sigma<1$, we have an estimate
\begin{equation}\label{Salphaeq}
    S(\alpha)\ll N^{\sigma+\varepsilon}
\end{equation}
for $\alpha\in\frak{m}$ and some arbitrarily small $\varepsilon>0$. Then $c_1$ is any constant such that there exists a set $\mathcal{E}$ with $\mu(\mathcal{E})=N^{1-2\sigma}$ for which
\begin{equation}\label{Gc1bound}
    |G(x)|=\left|\sum_{j=0}^{L-1}e(2^j x)\right|<c_1L
\end{equation}
holds for all $x\in[0,1]\setminus\mathcal{E}$. Thus, unconditionally one computes $c_1$ using $\sigma=4/5$ by~\eqref{minorarcpw}. Then, under GRH, Pintz and Ruzsa use $\sigma=3/4$, which follows from an exponential sum estimate of Hardy and Littlewood~\cite[Lemma~9]{hardy1923some}. The actual computation to obtain the values of $c_1$ in~\eqref{lilc1bounds} is nontrivial, and is explained in detail in~\cite[\S 4-7]{pintz2003linnik}.

With all of the notation above, Pintz and Ruzsa obtain the following $L^2$ estimate.
\begin{lemma}[{See \cite[Lemma~13]{pintz2003linnik} or \cite[Lemma~9]{pintz2020linnik}}]\label{L2lem}
    With $A(k)$, $C_2'$ and $c_1$ as defined above, one has, for $\varepsilon>0$ and sufficiently large $N$,
    \begin{equation*}
        \sum_{n\leq N}(r_k'(n))^2\leq 2NL^{2k}(1+A(k)+C_2'c_1^{2k-2}+\varepsilon).
    \end{equation*}
\end{lemma}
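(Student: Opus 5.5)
We follow the Pintz--Ruzsa circle-method argument. By Parseval, $\sum_{n\le N}(r_k'(n))^2\le\int_0^1|S(\alpha)G^k(\alpha)|^2\,\mathrm{d}\alpha$, and we split this integral into the major arcs $\mathfrak{M}$ and the minor arcs $\mathfrak{m}$. The target bound has two pieces. The first, $2NL^{2k}(1+A(k))$, should come from the full square of the main-term approximation to $S(\alpha)$. The second, $2NL^{2k}C_2'c_1^{2k-2}$, should come from everything else, controlled by a sieve upper bound combined with the bound $|G|<c_1L$ off a small exceptional set.

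\emph{Major arcs.} On $\mathfrak{M}$ we replace $S(\alpha)$ by its standard approximation $\frac{\mu(q)}{\varphi(q)}\sum_{n\le N}e(n\beta)$ for $\alpha=a/q+\beta$. This is justified under GRH with $P=\sqrt N L^{-8}$ (via \cite[Lemma~1]{pintz2003linnik}), and unconditionally with $P=N^{4/9-\varepsilon}$ via Pintz's explicit formula. Expanding $|G^k|^2=\sum_m r_{k,k}(m,L)e(m\alpha)$ turns the major-arc integral into $\sum_m r_{k,k}(m)$ times a truncated singular series for the count of $p_1-p_2=m$. The term $m=0$ contributes the diagonal $\sim NL\cdot\#\{\text{diagonal tuples}\}$; together with the $m\neq0$ terms having the full singular series $\sigma(m)$, this gives $N\,S(k,L)\sim 2NL^{2k}(1+A(k))$ by the definition of $A(k)$. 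The cost of truncating the singular series at $q\le P$, rather than using the full one, must be tracked. This truncation error, together with the minor arcs, is what $C_2'$ accounts for.

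\emph{Remaining part.} Rather than bounding $\mathfrak{m}$ directly, we write the full integral as the major-arc main term plus $\int_0^1|G|^{2k}\bigl(|S|^2-|S_{\mathfrak M}|^2\bigr)$, in the spirit of Pintz--Ruzsa. We then use $|G|^{2k}\le c_1^{2k-2}L^{2k-2}|G|^2$ outside $\mathcal E$. On $\mathcal E$, which has measure $N^{1-2\sigma}$, the bound \eqref{Salphaeq} gives $|S|^2\le N^{2\sigma+\varepsilon}$, so that piece is $o(NL^{2k})$. Outside $\mathcal E$ we are reduced to $c_1^{2k-2}L^{2k-2}\int|S|^2|G|^2$-type quantities, which unfold into $\sum_{h=2^a-2^b}R(N,h)$. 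Bounding these via Proposition~\ref{Cstarstarprop} produces $\frac12 C_1 C_0\sum_{a,b}\prod_{p\mid 2^a-2^b}\frac{p-1}{p-2}$. Averaging over $a-b$ and grouping by the order of $2$ modulo odd $d$ yields $R_0$, and subtracting the already-counted major-arc contribution replaces $C_1$ by $C_1-2$. The diagonal $h=0$ gives the extra $(1-\log P/\log N)\frac{\log2}{2}$ term, coming from the part of $\int|S|^2$ not captured on the major arcs, whose size is $\approx N\log(N/P)$. Summing these contributions gives the stated $C_2'$ term.

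\emph{Main obstacle.} The hard step is making the subtraction rigorous. We must show that the $\sigma(h)$ main term, already counted in the major arcs, is exactly removed, leaving the factor $C_1-2$, uniformly for $h$ up to $N$ (hence the need for uniformity in Proposition~\ref{Cstarstarprop}). We must also evaluate the singular-series average over differences of powers of two to obtain $R_0$. We would follow \cite[Lemma~9]{pintz2020linnik} essentially verbatim, substituting Proposition~\ref{Cstarstarprop} for the older sieve constant.
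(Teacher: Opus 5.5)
Your route is the Pintz--Ruzsa argument that the paper cites, with Proposition~\ref{Cstarstarprop} substituted for the old sieve constant. However, two steps fail as written.

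The main problem is the term $m=0$. Since $S(k,L)$ is a sum over $m\neq0$, the diagonal is not part of $NS(k,L)$. On the major arcs it contributes $r_{k,k}(0)\int_{\mathfrak M}|S|^2\,\mathrm{d}\alpha\sim r_{k,k}(0)\,N\log P$ in addition to $NS(k,L)$. As $r_{k,k}(0)\asymp L^k$, this is $O(NL^{k+1})=o(NL^{2k})$ for $k\ge2$, so it can be dropped there. For $k=1$, however, it is $\sim LN\log P\asymp NL^2$. It then cancels exactly the saving $LN\log P$ that produces the factor $1-\log P/\log N$ in the minor-arc diagonal. In fact, for $k=1$ every step is an identity except the sieve bound. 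So the method yields only $\sum_{n\le N}r_1'(n)^2\le(C_1+\varepsilon)C_0R_0NL^2+(1+\varepsilon)NL\log N$. This is the statement with $1-\log P/\log N$ replaced by $1$, i.e.\ the unrefined constant of \cite{pintz2020linnik}. Your argument therefore establishes the lemma only for $k\ge2$. The case $k=1$, which Theorem~\ref{PRthm} uses when $K\le3$, does not follow this way.

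The second problem is the minor-arc reduction. With $\mu(\mathcal E)=N^{1-2\sigma}$ and $|S|^2\ll N^{2\sigma+2\varepsilon}$ on $\mathfrak m$, the contribution of $\mathfrak m\cap\mathcal E$ is $\ll N^{1+2\varepsilon}L^{2k}$, which is not $o(NL^{2k})$. You need $\mu(\mathcal E)\ll N^{1-2\sigma-\delta}$ for a fixed $\delta>2\varepsilon$. Equivalently, the decay exponent of the measure of $\{|G|\ge c_1L\}$ must strictly exceed $2\sigma-1$, and this is the margin one must build into the choice of $c_1$. Moreover, the domination $|G|^{2k}\le(c_1L)^{2k-2}|G|^2$ may only be used against the nonnegative weight $|S|^2$ on $\mathfrak m\setminus\mathcal E$. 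It cannot be used against $|S|^2-|S_{\mathfrak M}|^2$, which can change sign on $\mathfrak M$ (where \eqref{Salphaeq} is also unavailable). The correct chain is
\[
\int_{\mathfrak m}|SG^k|^2\,\mathrm{d}\alpha\le(c_1L)^{2k-2}\Bigl(\int_0^1|SG|^2\,\mathrm{d}\alpha-\int_{\mathfrak M}|SG|^2\,\mathrm{d}\alpha\Bigr)+o(NL^{2k}).
\]
So, besides the upper asymptotic for $\int_{\mathfrak M}|SG^k|^2$, you need a separate \emph{lower} bound $\int_{\mathfrak M}|SG|^2\ge(1-\varepsilon)(2C_0R_0NL^2+LN\log P)$ for the $k=1$ major-arc integral. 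Combine this with $\int_0^1|SG|^2\le(C_1+\varepsilon)C_0R_0NL^2+(1+\varepsilon)LN\log N$ from Proposition~\ref{Cstarstarprop}, and use $\log N\sim L\log 2$. This gives $2NL^{2k}c_1^{2k-2}(C_2'+\varepsilon)$, which is where $C_1-2$ and $1-\log P/\log N$ come from. Nothing is cancelled because it was ``already counted'' in the $k$-th major-arc integral.
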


Finally, from here, one then obtains the following result, which allows us to compute a value of $K$ in the Linnik--Goldbach problem.

\begin{theorem}[{Pintz--Ruzsa \cite{pintz2003linnik,pintz2020linnik}}]\label{PRthm}
    Let $K\geq 2$ and $i,j\geq 1$ be such that $K=i+j$ with $i=j$ or $i=j+1$ depending on whether $K$ is even or not. Then, using the notation above, if
    \begin{equation}
        \sqrt{A(i)+C_2'c_1^{2i-2}}\sqrt{A(j)+C_2'c_1^{2j-2}} <1,\label{PReq}
    \end{equation}
    then every sufficiently large even number is the sum of two primes and $K$ powers of $2$. 
\end{theorem}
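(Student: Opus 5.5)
Fix $K=i+j$ as in the statement and an even $N$ large. Write $\mathcal{R}_k'(n)=r_k'(n)$ as in \eqref{rpdef}. The quantity to show positive is
\begin{equation*}
    r_K''(N)=\sum_{m} r_i'(m)\,r_j'(N-m),
\end{equation*}
since every representation $N=p_1+p_2+2^{\nu_1}+\cdots+2^{\nu_K}$ splits uniquely as $m=p_1+2^{\nu_1}+\cdots+2^{\nu_i}$ and $N-m=p_2+2^{\nu_{i+1}}+\cdots+2^{\nu_K}$, with matching weights $\log p_1\log p_2$. The idea is to compare $r_K''(N)$ with the "expected" convolution of the $L^1$ masses. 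The fluctuation is then controlled by the $L^2$ bound of Lemma~\ref{L2lem} via Cauchy--Schwarz.

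Concretely, set $\rho_k(n)=r_k'(n)-L^k\cdot 2\cdot\mathbf{1}_{n\text{ odd}}$ for $n\le N$ (and $\rho_k(n)=0$ otherwise). The main term $2L^k$ on odd $n$ is chosen because $r_k'(n)$ vanishes on even $n$ except from $p=2$, which is negligible. Lemma~\ref{L1lem} then says the average of $r_k'$ over $n\le N$ is $\sim L^k$, i.e.\ about $2L^k$ on odd $n$.

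First, expand
\begin{equation*}
    \sum_{n\le N} r_k'(n)^2=\sum_{n\le N}\rho_k(n)^2+2\cdot 2L^k\sum_{n\text{ odd}}r_k'(n)-\tfrac N2(2L^k)^2.
\end{equation*}
Using Lemma~\ref{L1lem}, the cross term is $4NL^{2k}(1+o(1))$, so Lemma~\ref{L2lem} gives
\begin{equation*}
    \sum_{n}\rho_k(n)^2\le 2NL^{2k}\bigl(A(k)+C_2'c_1^{2k-2}+\varepsilon\bigr).
\end{equation*}

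Next, write $r_i'=2L^i\mathbf{1}_{\mathrm{odd}}+\rho_i$ and similarly for $j$ in the convolution at $N$. Since $N$ is even, $m$ and $N-m$ are both odd on the main part. The main term is
\begin{equation*}
    \sum_{m\text{ odd}}4L^{i+j}\sim 2NL^K.
\end{equation*}
The two cross terms are $2L^j\sum_{m\text{ odd}}\rho_i(m)$ and its analogue. Each is $o(NL^K)$, again by Lemma~\ref{L1lem}: the $r_i'$ sum over odd $m\le N$ is $\sim NL^i$, which exactly cancels the subtracted $2L^i\cdot N/2$. Some care is needed with the range $m\le N$ versus $N-m\le N$, but both ranges are all of $[1,N]$ up to boundary effects.

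The remaining term is bounded by Cauchy--Schwarz:
\begin{equation*}
    \Bigl|\sum_m\rho_i(m)\rho_j(N-m)\Bigr|\le\Bigl(\sum\rho_i^2\Bigr)^{1/2}\Bigl(\sum\rho_j^2\Bigr)^{1/2}\le 2NL^{K}\sqrt{A(i)+C_2'c_1^{2i-2}+\varepsilon}\,\sqrt{A(j)+C_2'c_1^{2j-2}+\varepsilon}.
\end{equation*}
Hence
\begin{equation*}
    r_K''(N)\ge 2NL^K\bigl(1-\sqrt{\cdots}\sqrt{\cdots}-o(1)\bigr).
\end{equation*}
Condition \eqref{PReq} is strict, so choosing $\varepsilon$ small makes the right side positive for large $N$, and the theorem follows. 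The choice $i=j$ or $i=j+1$ only matters for optimising the product; the argument works for any split.

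The main obstacle is the bookkeeping in the cross terms and the centring. I need to check that the normalisation in Lemma~\ref{L2lem} (the factor $2NL^{2k}$ with the "$1+$") is exactly consistent with centring at $2L^k$ on odd integers. I also need the $L^1$ asymptotic to hold over odd $n$ and over the shifted range $N-m$, not just over all $n\le N$. If the subtraction leaves an extra $O(NL^{2k})$ term, it must cancel exactly, so I would verify this constant carefully. I would handle it by restating Lemma~\ref{L1lem} as $\sum_{n\le N,\,n\text{ odd}}r_k'(n)\sim NL^k$, which is immediate from its proof since $N-p$ parity is fixed up to $p=2$.
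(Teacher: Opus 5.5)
Your proof is correct and follows the same route as the paper. In both, $r_k'$ is centred at $2L^k$ on odd integers. Lemma~\ref{L1lem} makes the cross terms $o(NL^K)$, and Lemma~\ref{L2lem} bounds the centred second moment by $2NL^{2k}\bigl(A(k)+C_2'c_1^{2k-2}+\varepsilon\bigr)$. Cauchy--Schwarz then controls $\sum_m\rho_i(m)\rho_j(N-m)$. The paper restricts to odd $m,n$ from the start, whereas you keep even $n$ in $\rho_k$ uncentred. That costs nothing, since only $p=2$ can occur there and the contribution is negligible.
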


\begin{proof}
    To begin with, we note that by Lemma~\ref{L1lem}, the average value of $r_k'(n)$ (with $n$ odd) is $2NL^k$. Therefore, the difference
    \begin{equation*}
        s_k(n):=r_k'(n)-2L^k
    \end{equation*}
    satisfies
    \begin{equation*}
        \sum_{\substack{n\leq N\\2\nmid n}}s_k(n)=o(NL^k).
    \end{equation*}
    The goal now is to establish, for every large even $N$, the positivity of
    \begin{align}\label{sieq1}
        r_K''(N)&=\sum_{\substack{m+n=N\\2\nmid m\\2\nmid n}}r_i'(m)r_j'(n)\notag\\
        &=\sum_{\substack{m+n=N\\2\nmid m\\2\nmid n}}s_i(m)s_j(n)+2L^j\sum_{\substack{m\leq N\\2\nmid m}}s_i(m)+2L^i\sum_{\substack{n\leq N\\2\nmid n}}s_j(n)+4L^K\sum_{\substack{m+n=N\\2\nmid m\\2\nmid n}}1\notag\\
        &=\sum_{\substack{m+n=N\\2\nmid m\\2\nmid n}}s_i(m)s_j(n)+2L^{K}N+o(NL^{K}).
    \end{align}
    To deal with the sum over $s_i(m)s_j(n)$, we apply Cauchy--Schwarz, giving
    \begin{align}\label{sieq2}
        \sum_{\substack{m+n=N\\2\nmid m\\2\nmid n}}s_i(m)s_j(n)&\leq\left(\sum_{\substack{m\leq N\\2\nmid m}}s_i(m)^2\right)^{1/2}\left(\sum_{\substack{n\leq N\\2\nmid n}}s_j(n)^2\right)^{1/2}.
    \end{align}
    Now, by applying both the $L^1$ bound (Lemma~\ref{L1lem}) and the $L^2$ bound (Lemma~\ref{L2lem})
    \begin{align}\label{sieq3}
        \sum_{\substack{m\leq N\\2\nmid m}}s_i(m)^2&=\sum_{\substack{m\leq N\\2\nmid m}}(r_i'(m)-2L^i)^2\notag\\
        &=\sum_{\substack{m\leq N\\2\nmid m}}r_i'(m)^2-4L^i\sum_{\substack{m\leq N\\2\nmid m}}r_i'(m)+4L^{2i}\sum_{\substack{m\leq N\\ 2\nmid m}}1\notag\\
        &\leq 2NL^{2i}\left(A(i)+C_2'c_1^{2i-2}+\varepsilon\right)
    \end{align}
    and analogously for the sum over $s_j(n)^2$. Substituting~\eqref{sieq3} into~\eqref{sieq2} and then~\eqref{sieq1} completes the proof.
\end{proof}

\subsection{Proof of Theorem~\ref{GRHGoldLin}}
From Theorem~\ref{PRthm} and Proposition~\ref{Cstarstarprop}, the proof of Theorem~\ref{GRHGoldLin} readily follows.

\begin{proof}[Proof of Theorem~\ref{GRHGoldLin}]
    Setting $K=6$ in Theorem~\ref{PRthm}, assuming GRH and setting $C_1=6.7814$ (from Proposition~\ref{Cstarstarprop}) gives that the left-hand side of~\eqref{PReq} is bounded above by
    \begin{equation*}
        A(3)+C_2'(0.7163436)^4\leq  0.865<1.\qedhere
    \end{equation*}
\end{proof}

Given the recent breakthrough work of Lichtman~\cite{lichtman2023primes} and Pascadi~\cite{pascadi2025exponents}, it seems likely that the value of $C_1=6.7814$ (from Proposition~\ref{Cstarstarprop}) may be lowered further. In Table~\ref{PRtable} we thereby list the value of $C_1$ required to get different values of $K$ in the Linnik--Goldbach problem. Notably, we are very close to obtaining $K=7$ unconditionally. In the next subsection, we detail how recently announced work should allow one to overcome this $K=7$ barrier.

We also recall that assuming the Elliott--Halberstam conjecture (Conjecture~\ref{Ellhcon}) one may take $C_1=4$ (Proposition~\ref{EHprop}), giving the following result.

\begin{theorem}\label{EHthm}
    Assume the Elliott--Halberstam conjecture. Then the Linnik--Goldbach problem holds with $K=4$.
\end{theorem}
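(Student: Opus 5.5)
The proof should be a direct application of Theorem~\ref{PRthm} with $K=4$, so that $i=j=2$. In that case condition~\eqref{PReq} collapses to the single inequality
\begin{equation*}
    A(2)+C_2'c_1^{2}<1.
\end{equation*}
Assuming the Elliott--Halberstam conjecture, Proposition~\ref{EHprop} lets us take $C_1=4$ in~\eqref{gapsieveeq}. This is the only place the conjecture enters. The rest of the Pintz--Ruzsa machinery stays unconditional: the $L^1$ asymptotic of Lemma~\ref{L1lem}, the major arc asymptotic with $P=N^{4/9-\varepsilon}$ coming from Pintz's explicit formula, the minor arc bound~\eqref{minorarcpw}, and hence Lemma~\ref{L2lem}.

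We therefore work in the unconditional setting: $\sigma=4/5$, $c_1=0.7894009$ from~\eqref{lilc1bounds}, and $P=N^{4/9-\varepsilon}$. The one point to check is that Lemma~\ref{L2lem} uses the sieve bound for $R(N,h)$ only as a black box, uniformly in $h<N$. Proposition~\ref{EHprop} supplies exactly this uniformity, since the Elliott--Halberstam conjecture, like~\eqref{tripeq}, controls the maximum over residue classes. So substituting $C_1=4$ into~\eqref{C2def1} is legitimate.

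It remains to do the numerics. By~\eqref{C2def1},
\begin{equation*}
    C_2'=\tfrac12(2+\varepsilon)R_0C_0+\left(1-\tfrac49+\varepsilon\right)\tfrac{\log 2}{2}.
\end{equation*}
Using $R_0<1.93656$ from~\eqref{R0bounds} and $C_0=0.66016\ldots$, the first term is about $1.2785$. The second term is about $\tfrac59\cdot 0.34657\approx 0.1925$. Together these give $C_2'\approx 1.471$ for $\varepsilon$ small enough.

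Next, $c_1^2\approx 0.62316$, so $C_2'c_1^2\approx 0.9167$. Adding $A(2)<0.05549$ gives a left-hand side of about $0.972<1$. Theorem~\ref{PRthm} then yields $K=4$.

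The margin here is only about $0.03$, so the main thing to verify carefully is this numerical step. In particular, one must use the refined form of~\eqref{C2def1} that includes the $\log P/\log N$ term, and take $P$ as large as $N^{4/9-\varepsilon}$ rather than the $N^{0.41}$ of~\cite{pintz2020linnik}. Without these refinements the second term of $C_2'$ would be about $0.204$ rather than $0.1925$. As a sanity check, $K=3$ should be out of reach: for odd $K$ the product in~\eqref{PReq} involves $A(1)+C_2'$, which already exceeds $1$.
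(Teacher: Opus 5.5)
Your proposal is correct and follows the paper's route: Theorem~\ref{PRthm} with $K=4$ ($i=j=2$), $C_1=4$ from Proposition~\ref{EHprop}, and the unconditional $c_1=0.7894009$ with $P=N^{4/9-\varepsilon}$, giving $A(2)+C_2'c_1^2\approx 0.972<1$, which matches the threshold $C_1\le 4.069$ in Table~\ref{PRtable}. Your side remarks are slightly off, though: the indispensable refinement is the $\log P/\log N$ term in \eqref{C2def1} (without it the left-hand side is about $1.07$), whereas $P=N^{0.41}$ would still give about $0.980$; and for $K=3$, $A(1)+C_2'>1$ does not by itself push the product over $1$ (it is the full product, about $1.30$, that exceeds $1$), nor does $A(1)$ appear for odd $K\ge 5$.
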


\begin{table}
\caption{The value of $C_1$ required to solve the Linnik--Goldbach problem for $K$ powers of 2, as computed via Theorem~\ref{PRthm}.}
\vspace{1em}
\begin{tabular}{|c|c|c|||c|c|c|}
    \hline
    $K$ & Assuming GRH & $C_1$ required &  $K$ & Assuming GRH & $C_1$ required\\
    \hline
        7 & Yes & 9.958 & 7 & No & 6.737 \\
        6 & Yes & 7.589 & 6 & No & 5.672\\
        5 & Yes & 5.859 & 5 & No & 4.782\\
        4 & Yes & 4.608 & 4 & No & 4.069 \\
        3 & Yes & 3.613 & 3 & No & 3.398 \\
        2 & Yes & 2.856 & 2 & No & 2.826\\
        \hline
    \hline
\end{tabular}
\label{PRtable}
\end{table}

\subsection{Bounds on exponential sums over primes}\label{Maynardsect}

Recall Vinogradov's exponential sum bound over the minor arcs (equation~\eqref{minorarcpw}):
\begin{equation*}
    S(\alpha)\ll\left(\frac{N}{\sqrt{P}}+N^{4/5}+\sqrt{NP}\right)L^4\ll L^4 N^{4/5}.
\end{equation*}
Recently, Maynard, Pandey and Radziwi\l\l\footnote{See for example, Maynard's \href{https://www.crmath.ca/rapports/2026/PNT/pint_workshop_titles_abstracts.pdf}{abstract} at the 2026 Probability in Number Theory workshop in Montreal.} have announced an improvement to this classical bound, obtaining a power saving on the critical $N^{4/5}$ term. In particular, they are able to reduce this term to $N^{19/24+\varepsilon}$.

In our notational setup for the Linnik--Goldbach problem, this amounts to setting ${\sigma=19/24}$ in~\eqref{Salphaeq}. Consequently, one can use a lower value (unconditionally) for $c_1$ in Theorem~\ref{PRthm}. To test this numerically, we used the existing code by Languasco\footnote{Available at \href{https://codeocean.com/capsule/5525188/tree/v2}{codeocean.com/capsule/5525188/tree/v2}.}, first used in \cite{Ales}, to compute a valid value for $c_1$ in~\eqref{Gc1bound} with $\sigma=19/24$. This gives the value of
\begin{equation*}
    c_1=0.77779.
\end{equation*}
Substituting this new value of $c_1$ into Theorem~\ref{PRthm} with $K=7$ and $C_1=6.7814$ (from Proposition~\ref{Cstarstarprop}) yields
\begin{equation*}
    \sqrt{A(4)+C_2'c_1^{6}}\sqrt{A(3)+C_2'c_1^4}\leq 0.934,
\end{equation*}
thereby showing that every sufficiently large even number is the sum of two primes and $7$ powers of $2$. Given the preliminary nature of Maynard, Pandey and Radziwi\l\l's work, we have not listed this as a theorem. In any case, we even this small improvement to Vinogradov's bound (note $19/24=0.79166\ldots$) has a sizeable impact on the Linnik--Goldbach problem.

\section{Romanov's constant}\label{Romsect}
In this section we detail the impact of the constant $C_1$ (in Proposition~\ref{Cstarstarprop}) on the value one obtains for Romanov's constant as defined in Section~\ref{romintro}. We primarily discuss the classical approach of Pintz~\cite{pintz2006}, for which Lichtman's new Goldbach bounds are valid. We also outline the more recent method of Elsholtz and Schlage-Puchta~\cite{elsholtz2018}, which currently gives the best result of $\underline{d}\geq0.10788$.

\subsection{Pintz's method}
In~\cite{pintz2006}, Pintz gives a method for computing values of Romanov's constant. The method is very similar to the original approach of Romanov, with several optimisations added.

First, let
\begin{equation}\label{rndef}
    r(n):=\#\{(p,a):n=p+2^a\}
\end{equation}
denote the number of representations of $n$ as the sum of a prime and a power of two. Noting that $r(n)$ is an unweighted version of $r_k'(n)$ from the Linnik--Goldbach problem (see~\eqref{rpdef}), we attain $L^1$ and $L^2$ estimates for $r(n)$ which are very similar to those given for $r'(n)$ in Lemmas~\ref{L1lem} and~\ref{L2lem}. In particular, Pintz gives~\cite[Proposition~2]{pintz2006}
\begin{equation*}\label{S1bound}
    S_1(N):=\sum_{n\leq N}r(n)\sim\frac{N}{\log 2}
\end{equation*}
and for sufficiently large $N$~\cite[Lemma~3']{pintz2006}
\begin{equation*}\label{S2bound}
    S_2(N):=\sum_{n\leq N}r(n)^2\leq\widetilde{C}N,
\end{equation*}
where 
\begin{equation*}
    \widetilde{C}=\frac{1}{\log 2}\left(\frac{C_0(C_1+\varepsilon)R_0}{\log 2}+1\right)
\end{equation*}
with $C_0$ as in~\eqref{twinpconst}, $C_1$ as in Proposition~\ref{Cstarstarprop}, $R_0$ as in~\eqref{R0bounds}, and $\varepsilon>0$ arbitrary. From here, one can readily compute a lower bound for $\underline{d}$ via Cauchy--Schwarz. In particular, one has
\begin{equation*}
    S_1(N)^2\leq \left(\sum_{\substack{n\leq N\\r(n)>0}}1\right)S_2(N),
\end{equation*}
so that
\begin{equation*}
    \sum_{\substack{n\leq N\\r(n)>0}}1\geq\frac{N}{\widetilde{C}(\log 2)^2},
\end{equation*}
and thus $\underline{d}> 1/\widetilde{C}(\log 2)^2$. However, since $r(n)$ only takes integer values, Pintz is able to refine the use of Cauchy--Schwarz via the following lemma.

\begin{lemma}[{\cite[Lemma~4']{pintz2006}}]\label{pintzdenlem}
    Suppose that $b(n)\in\mathbb{N}\cup\{0\}$ for each $n\leq N$. Assume that
    \begin{equation*}
        \sum_{n=1}^Nb(n)=M\quad\text{and}\quad \sum_{n=1}^Nb(n)^2\leq DM.
    \end{equation*}
    Then,
    \begin{equation*}
        \#\{n\leq N:b(n)>0\}\geq\frac{\lceil D\rceil+\lfloor D\rfloor-D}{\lceil D\rceil\lfloor D\rfloor}M.
    \end{equation*}
\end{lemma}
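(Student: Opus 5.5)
The argument reduces to a one-variable inequality. Set $m=\lfloor D\rfloor$ and $m+1=\lceil D\rceil$; the degenerate case where $D$ is an integer is treated separately below. The key observation is that for any integer $b\geq 0$,
\begin{equation*}
    (b-m)(b-m-1)\geq 0,
\end{equation*}
since no integer lies strictly between the consecutive integers $m$ and $m+1$. This holds even for $b=0$, where it reads $m(m+1)\geq 0$.

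Expanding gives $b^2-(2m+1)b+m(m+1)\geq 0$ for every integer $b\geq 0$. We apply this with $b=b(n)$ and sum only over those $n\leq N$ with $b(n)>0$. Writing $Z:=\#\{n\leq N:b(n)>0\}$, this yields
\begin{equation*}
    \sum_{n\leq N}b(n)^2-(2m+1)M+m(m+1)Z\geq 0.
\end{equation*}
Terms with $b(n)=0$ contribute nothing to the two sums, so summing only over $b(n)>0$ loses nothing. Inserting the hypothesis $\sum b(n)^2\leq DM$ gives
\begin{equation*}
    m(m+1)Z\geq (2m+1-D)M.
\end{equation*}
Since $2m+1=\lceil D\rceil+\lfloor D\rfloor$, this is exactly the claimed bound, after dividing by $m(m+1)=\lceil D\rceil\lfloor D\rfloor$.

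For the degenerate cases, first suppose $D$ is an integer, so $\lceil D\rceil=\lfloor D\rfloor=D$. The claimed constant becomes $(2D-D)/D^2=1/D$, which is just the Cauchy--Schwarz bound $Z\geq M^2/\sum b(n)^2\geq M/D$. It also follows from the inequality above with $m=D$ or $m=D-1$. Division by $\lfloor D\rfloor$ requires $D\geq 1$. This holds automatically whenever $M>0$, because $b(n)^2\geq b(n)$ for integers, so $DM\geq M$. If $M=0$ the statement is trivial.

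The only real step is choosing the quadratic $(b-\lfloor D\rfloor)(b-\lceil D\rceil)$ that vanishes at the two integers nearest $D$. This is the integer-valued refinement of Cauchy--Schwarz. Everything else is bookkeeping, so I do not expect a genuine obstacle; the main care needed is handling integer $D$ and $D<1$.
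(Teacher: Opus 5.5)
Your proof is correct and complete. The paper gives no proof to compare against: the lemma is quoted from Pintz~\cite[Lemma~4']{pintz2006} and used as a black box, with $M=N/\log 2$ and $D=\widetilde{C}\log 2$.

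Summing $(b-\lfloor D\rfloor)(b-\lceil D\rceil)\geq 0$ over the support of $b$ only is the natural self-contained proof of this integer refinement of Cauchy--Schwarz. Your observation that $M>0$ forces $D\geq 1$ (because $b^2\geq b$ on the nonnegative integers) correctly handles the potential zero denominator.

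Two small remarks:
\begin{itemize}
\item You do not need a separate case for integer $D$. There $(b-\lfloor D\rfloor)(b-\lceil D\rceil)=(b-D)^2\geq 0$, and the same summation gives the stated bound verbatim.
\item Your aside that the integer case also follows with $m=D-1$ fails at $D=1$, where it yields only $0\geq 0$. This is harmless, since $m=D$ or plain Cauchy--Schwarz covers that case.
\end{itemize}
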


So, setting $M=N/\log 2$ and $D=\widetilde{C}\log 2$ in Lemma~\ref{pintzdenlem}, along with Lichtman's value of ${C_1=6.7814}$, we obtain
\begin{equation*}
    \underline{d}\geq 0.10695,
\end{equation*}
which is just short of Elsholtz and Schlage-Puchta's bound $\underline{d}\geq 0.10788$. In Table~\ref{romtable}, we give lower bounds for $\underline{d}$ assuming different values of $C_1$. Notably, we see that if $C_1\leq 6.71$, then an improvement to Elsholtz and Schlage-Puchta's bound is attainable via this relatively simple method of Pintz. In addition, one attains $\underline{d}\geq 0.17277 $ under assumption of the Elliott--Halberstam conjecture (via Proposition~\ref{EHprop}).

\begin{table}
\caption{Lower bounds for $\underline{d}$ via Pintz's method for hypothetical values for $C_1$.}
\vspace{0.5em}
\begin{tabular}{|c|c c|}
    \hline
    $C_1$ & Lower bound for $\underline{d}$ & Notes\\
    \hline
        8 & 0.09163 & Classical value of $C_1$ from Bombieri--Vinogradov.\\
        7.8209 & 0.09362 & Implied by the work of Wu~\cite{wu2004chen}. \\
        6.7814 & 0.10695 & Lichtman's value of $C_1$. \\
        6.71& 0.10799 & Beats Elsholtz and Schlage-Puchta's record. \\
        4 & 0.17277 & Follows from the Elliott--Halberstam conjecture.\\
        2 & 0.31098 & Conjectural value for $C_1$. \\
    \hline
\end{tabular}
\label{romtable}
\end{table}

\subsection{Elsholtz and Schlage-Puchta's method}\label{ElSPsect}
In~\cite{elsholtz2018} Elsholtz and Schlage-Puchta use a novel idea to bound $\underline{d}$, which takes advantage of the fact that $r(n)$ (defined in~\eqref{rndef}) has an irregular distribution, depending on the residue class of $n$ modulo a fixed integer $\ell$. For example, modulo $3$, it is much more likely that $p+2^{\nu}\equiv 0$ mod $3$. This is because both primes and powers of two are uniformly distributed on the residue classes $1$ and $2$ (mod $3$). Extending on from this simple example, Elsholtz and Schlage-Puchta work with the much larger modulus $\ell=2^{24}-1$ for their computations.

With this setup in mind, Elsholtz and Schlage-Puchta rely on an upper bound for
\begin{equation*}\label{RNhkldef}
    R(N,h,k,\ell):=\#\{p_1,p_2\leq N:p_2\equiv k\ \text{(mod $\ell$)},\: p_1-p_2=h\}, 
\end{equation*}
which generalises $R(N,h)$ defined in~\eqref{rnhdef}. This allows them to obtain $L^1$ and $L^2$ estimates for the representation function
\begin{equation*}
    r(n,k,\ell):=\#\{(p,a):n=p+2^a\ \text{and}\ n\equiv k\thinspace(\ell) \},
\end{equation*}
in an analogous way to Pintz's moment estimates for $r(n)=r(n,1,1)$. Consequently, they compute the local densities 
\begin{equation*}\label{finaldinfeq}
    \underline{d}(k,\ell):=\liminf_{N\to\infty}\frac{\#\{n\leq N:r(n,k,\ell)>0\}}{N/\ell}.
\end{equation*}
via Cauchy--Schwarz, or more precisely, the refinement of Cauchy--Schwarz in Lemma~\ref{pintzdenlem}. Finally, a lower bound for $\underline{d}$ is computed via the identity

\begin{equation*}
    \underline{d}=\frac{\sum_{0\leq k<\ell}\underline{d}(k,\ell)}{\ell}.
\end{equation*}

Due to the highly skewed distribution of $\underline{d}(k,\ell)$ as $k$ varies, this method yields a modest improvement over Pintz's simpler approach.

In analogy with our refinement of the Goldbach--Linnik problem, the key question here is whether Elsholtz and Schlage-Puchta's bound on $R(N,h,k,\ell)$ can be improved with Lichtman's new sieve bounds. In~\cite{elsholtz2018}, the bound
\begin{equation}\label{romsieveeq}
    \sup_{\substack{k\in\mathbb{Z}\\(k,\ell)=1}}R(N,h,k,\ell)\leq (8+\varepsilon)\frac{C_0N}{\varphi(\ell)(\log N)^2}\prod_{\substack{p\mid \ell h\\p>2}}\frac{p-1}{p-2}
\end{equation}
is given for any fixed $\ell\geq 1$ and all $h<N$. The leading constant of $8$ here is obtained by the Bombieri--Vinogradov theorem, by a similar argument to the classical bound for $R(N,h)$ discussed in Section~\ref{uppersect}. Details of this classical argument can be found in~\cite[Theorem~3.12]{halberstam1974sieve}.

It seems possible then, that one could use Lichtman's theory in~\cite{lichtman2023primes} to reduce the constant $8$ to $6.7814$, in analogy with Proposition~\ref{Cstarstarprop}. However, this is unfortunately not possible. To see this, we note that to bound $R(N,h,k,\ell)$ one sifts the set
\begin{equation*}
    \mcA'(k,\ell)=\{p+h:p\leq N,\:(p,h\ell)=1,\:p\equiv k\ \text{(mod $\ell$)}\},
\end{equation*}
and so is led to estimate
\begin{equation*}\label{Akldeq}
    \#\mcA'(k,\ell)_d:=\#\{n\in\mcA'(k,\ell):d\mid n\}=\pi(N;d\ell,a)+O(\log \ell h),
\end{equation*}
where $a$ depends on $h,k,\ell$ and $d$ obtained via the Chinese remainder theorem from solving the congruences $p\equiv -h$ (mod $d$) and $p\equiv k$ (mod $\ell$). Note that by contrast, one had to study the fixed residue class $-h$ mod $d$ when bounding $R(N,h)$ in Section~\ref{rnhsect}.

Now, as discussed in Section~\ref{uppersect}, Lichtman's work then allows one to bound the sum
\begin{equation}\label{lichtv2}
    \sup_{0<|a|<x^{1+\varepsilon}}\sum_{d\leq x^{\theta-\varepsilon}}\lambda(d)E(x;d\ell,a),
\end{equation}
with $E(x;d\ell,a)=\pi(x;d\ell,a)-\pi(x)/\varphi(d\ell)$. However, since $a$ depends on $d$ in this setting bounding a sum of the form~\eqref{lichtv2} is not sufficient. In particular, one would require the supremum over $a$ to appear \emph{inside} the sum rather than \emph{outside} the sum as in~\eqref{lichtv2}.

We conclude by remarking that although Lichtman's theory is not able to improve~\eqref{romsieveeq}, standard sieve weighting procedures depending on the Bombieri--Vinogradov theorem can be applied to the problem of bounding $R(N,h,k,\ell)$. So, as also mentioned by Elsholtz and Schlage-Puchta \cite[p.~716]{elsholtz2018}, one could apply Wu's complicated sieve weighting procedure in~\cite{wu2004chen} to improve the $8$ in~\eqref{romsieveeq} to $7.8209$. This would give\footnote{Elsholtz and Schlage-Puchta refrain from listing this result as a theorem, due to the technical nature of Wu's argument, making it preferable for someone to carefully go through the sieve weighting argument to verify that it extends to this mod $\ell$ setting.} $\underline{d}\geq 0.11011$. However, if one could overcome the uniformity issue appearing in Lichtman's result and reduce the constant in~\eqref{romsieveeq} to $6.7814$, then a significantly better bound of $\underline{d}\geq 0.12532$ would follow from Elsholtz and Schlage-Puchta's method.

\section*{Acknowledgements}
We are grateful to Alex Pascadi and Roger Heath-Brown for enjoyable discussions on this topic, as well as the referee for some remarks that helped to improve our exposition. DRJ is grateful to the Max Planck Institute for Mathematics in Bonn
for its hospitality and financial support.


\begin{thebibliography}{}

\bibitem[Bombieri and Davenport, 1966]{bombieri1966small}
Bombieri, E. and Davenport, H. (1966).
\newblock Small differences between prime numbers.
\newblock {\em Proc. Roy. Soc. Ser. A}, 293(1432):1--18.

\bibitem[Chen, 1978]{chen1978goldbach}
Chen, J.~R. (1978).
\newblock On the {G}oldbach's problem and the sieve methods.
\newblock {\em Sci. Sinica}, 21(6):701--739.



\bibitem[Elsholtz and Schlage-Puchta, 2018]{elsholtz2018}
Elsholtz, C. and Schlage-Puchta, J.-C. (2018).
\newblock On {R}omanov's constant.
\newblock {\em Math. Z.}, 288(3-4):713--724.


\bibitem[Friedlander and Iwaniec, 2010]{friedlander2010opera}
Friedlander, J.~B. and Iwaniec, H. (2010).
\newblock {\em Opera de {C}ribro}.
\newblock American Mathematical Society, Providence RI.

\bibitem[Gallagher P. X., 1975]{gallagher1975}
Gallagher P. X. (1975).
\newblock Primes and powers of 2.
\newblock {\em Invent. Math.}, 2:125--142.

\bibitem[Greaves, 2013]{greaves2013sieves}
Greaves, G. (2013).
\newblock {\em Sieves in {N}umber {T}heory}.
\newblock Springer-Verlag, Berlin Heidelberg.

\bibitem[Halberstam and Richert, 1974]{halberstam1974sieve}
Halberstam, H. and Richert, H. (1974).
\newblock {\em Sieve {M}ethods}.
\newblock Academic Press, London.

\bibitem[Hardy and Littlewood, 1923]{hardy1923some}
Hardy, G.~H. and Littlewood, J.~E. (1923).
\newblock Some problems of ``partitio numerorum'', {III}: {O}n the expression
  of a number as a sum of primes.
\newblock {\em Acta Math.}, 44(1):1--70.


\bibitem[Heath-Brown, 1979]{heathcanada}
Heath-Brown, D.~R. (1979).
\newblock The density of zeros of {D}irichlet's ${L}$-functions.
\newblock {\em Canadian J. Math.}, 31(2):231--240.

\bibitem[Heath-Brown, 2002]{heathsieves}
Heath-Brown, D.~R. (2002).
\newblock Lectures on sieves.
\newblock {\em Available at
  \href{https://arxiv.org/abs/math/0209360}{arXiv:0209360}}.

\bibitem[Heath-Brown and Schlage-Puchta, 2002]{heath2002integers}
Heath-Brown, D.~R. and Puchta, J.-C. (2002).
\newblock Integers represented as a sum of primes and powers of two.
\newblock {\em Asian J. Math.}, 6(3):535--565.

\bibitem[Iwaniec, 1980]{iwaniec1980}
Iwaniec, H. (1980).
\newblock A new form of the error term in the linear sieve.
\newblock {\em Acta Arith.}, 37:307--320.

\bibitem[Khalfalah and Pintz, 2006]{khalfalah2006representation}
Khalfalah, A. and Pintz, J. (2006).
\newblock On the representation of {G}oldbach numbers by a bounded number of
  powers of two.
\newblock In {\em Elementare und analytische Zahlentheorie}, pages 129--142.
  Schr. Wiss. Ges. Johann Wolfgang Goethe Univ. Frankfurt am Main.
  
  \bibitem{Ales}
Languasco, A. and Zaccagnini, A. (2010).
\newblock On a Diophantine problem with two primes and $s$ powers of two.
\newblock{\em Acta Arith.}, 145:193--208.

\bibitem[Lichtman, 2023]{lichtman2023primes}
Lichtman, J.~D. (2023).
\newblock Primes in arithmetic progressions to large moduli, and {G}oldbach
  beyond the square-root barrier.
\newblock {\em Preprint available at
  \href{https://arxiv.org/abs/2309.08522}{arXiv:2309.08522}}.

\bibitem[Lichtman, 2025]{lichtman2025modification}
Lichtman, J.~D. (2025).
\newblock A modification of the linear sieve, and the count of twin primes.
\newblock {\em Algebra Number Theory}, 19(1):1--38.

\bibitem[Linnik, 1953]{linnik1953addition}
Linnik, Y.~V. (1953).
\newblock Addition of prime numbers with powers of one and the same number (in
  {R}ussian).
\newblock {\em Mat. Sbornik N.S.}, 74(32):3--60.

\bibitem[Liu et~al., 1998]{liu1998number}
Liu, J., Liu, M., and Wang, T. (1998).
\newblock The number of powers of 2 in a representation of large even integers
  {II}.
\newblock {\em Sci. China Ser. A}, 41(12):1255--1271.

\bibitem[Maynard, 2025a]{maynard2025one}
Maynard, J. (2025a).
\newblock Primes in {A}rithmetic {P}rogressions to {L}arge {M}oduli {I}:
  {F}ixed {R}esidue {C}lasses.
\newblock {\em Mem. Amer. Math. Soc.}, 306(1542).

\bibitem[Maynard, 2025b]{maynard2025two}
Maynard, J. (2025b).
\newblock Primes in {A}rithmetic {P}rogressions to {L}arge {M}oduli {II}:
  {W}ell-{F}actorable {E}stimates.
\newblock {\em Mem. Amer. Math. Soc.}, 306(1543).

\bibitem[Maynard, 2025c]{maynard2025three}
Maynard, J. (2025c).
\newblock Primes in {A}rithmetic {P}rogressions to {L}arge {M}oduli {III}:
  {U}niform {R}esidue {C}lasses.
\newblock {\em Mem. Amer. Math. Soc.}, 306(1544).

\bibitem[Murty and Sinha, 2023]{murty2023}
Murty, M. R. and Sinha, K. (2023).
\newblock {\em An {I}ntroduction to the {C}ircle {M}ethod}.
\newblock American Mathematical Society, Providence RI.

\bibitem[Pascadi, 2025]{pascadi2025exponents}
Pascadi, A. (2025).
\newblock On the exponents of distribution of primes and smooth numbers.
\newblock {\em Preprint available at
  \href{https://arxiv.org/abs/2505.00653}{arXiv:2505.00653}}.

\bibitem[Pintz, 2006]{pintz2006}
Pintz, J. (2006).
\newblock A note on {R}omanov's constant.
\newblock {\em Acta Math. Hungar.}, 112(1-2):1--14.

\bibitem[Pintz, 2023]{pintz2023}
Pintz, J. (2023).
\newblock A new explicit formula in the additive theory of primes with
  applications {I}. {T}he explicit formula for the {G}oldbach problem and the
  {G}eneralized {T}win {P}rime {P}roblem.
\newblock {\em Acta Arith.}, 210:53--94.

\bibitem[Pintz and Ruzsa, 2003]{pintz2003linnik}
Pintz, J. and Ruzsa, I.~Z. (2003).
\newblock On {L}innik's approximation to {G}oldbach's problem {I}.
\newblock {\em Acta Arith.}, 109(2):169--194.

\bibitem[Pintz and Ruzsa, 2020]{pintz2020linnik}
Pintz, J. and Ruzsa, I.~Z. (2020).
\newblock On {L}innik's approximation to {G}oldbach's problem {II}.
\newblock {\em Acta Math. Hungar.}, 161(2):569--582.

\bibitem[Platt and Trudgian, 2015]{platt2015linnik}
Platt, D.~J. and Trudgian, T.~S. (2015).
\newblock Linnik's approximation to {G}oldbach's conjecture, and other
  problems.
\newblock {\em J. Number Theory}, 153:54--62.


\bibitem[Romanoff, 1934]{romanoff1934}
Romanoff, N.~P. (1934).
\newblock \"{U}ber einige {S}\"atze der additiven {Z}ahlentheorie.
\newblock {\em Math. Ann.}, 109(1):668--678.


\bibitem[Wu, 2004]{wu2004chen}
Wu, J. (2004).
\newblock Chen's double sieve, {G}oldbach's conjecture and the twin prime
  problem.
\newblock {\em Acta Arith.}, 114(3):215--273.

\end{thebibliography}
\end{document}